\documentclass{amsart}
\date{\today}
\usepackage{graphicx} 
\usepackage[dvipsnames]{xcolor}
\usepackage{verbatim,enumerate}
\title[Mixed type surfaces 
with  bounded mean curvature]{%
Mixed type surfaces 
with  bounded mean curvature
in $3$-dimensional space-times 
}
\author{A.~Honda}
\author{M.~Koiso}
\author{M.~Kokubu}
\author{M.~Umehara}
\author{K.~Yamada}
\address[Atsufumi Honda]{
  National Institute of Technology, Miyakonojo College, 
  473-1, Yoshio-cho, Miyakonojo, 
  Miyazaki 885-8567,
  Japan}
\email{atsufumi@cc.miyakonojo-nct.ac.jp}

\address[Miyuki Koiso]{%
Institute of Mathematics for Industry, 
Kyushu University, 
744 Motooka, Nishi-ku, 
Fukuoka, 819-0395, 
Japan 
}
\email{
koiso@math.kyushu-u.ac.jp
}

\address[Masatoshi Kokubu]{%
   Department of Mathematics, School of Engineering, 
   Tokyo Denki University, 
   Tokyo 120-8551, Japan}
\email{kokubu@cck.dendai.ac.jp}

\address[Masaaki Umehara]{
  Department of Mathematical and Computing Sciences,
  Tokyo Institute of Technology,
  2-12-1-W8-34, O-okayama Meguro-ku,
  Tokyo 152-8552, Japan}
\email{umehara@is.titech.ac.jp}

\address[Kotaro Yamada]{%
  Department of Mathematics,
  Tokyo Institute of Technology,
  O-okayama, Meguro, Tokyo 152-8551, Japan%
}
\email{kotaro@math.titech.ac.jp}

\subjclass[2010]{%
 Primary:53A35; 
Secondary 
57R42, 
35M10. 
}
\keywords{
causality, type change, 
mean curvature, Lorentzian manifolds 
}

\thanks{}
  


\numberwithin{equation}{section}
\newtheorem{Thm}{Theorem}[section]

\newtheorem{Cor}[Thm]{Corollary}
\newtheorem{Lemma}[Thm]{Lemma}
\newtheorem{Prop}[Thm]{Proposition}
\theoremstyle{definition}
\newtheorem{Def}[Thm]{Definition}
\newtheorem{Exa}[Thm]{Example}
\theoremstyle{remark}

 \newtheorem*{ack}{Acknowledgements}
\newcommand{\vect}[1]{\boldsymbol{#1}}
\newcommand{\op}[1]{{\operatorname{#1}}}

\newcommand{\R}{{\vect{R}}}

\newcommand{\mb}[1]{\vect{#1}}
\newcommand{\pmt}[1]{{\begin{pmatrix} #1  \end{pmatrix}}}

\renewcommand{\phi}{\varphi}
\renewcommand{\epsilon}{\varepsilon}

\begin{document}
\maketitle
\begin{abstract}
In this paper, we shall prove that
space-like surfaces with bounded mean
curvature functions
in real analytic Lorentzian
3-manifolds can
change their causality to
time-like surfaces
only if the mean curvature functions
tend to zero.
Moreover, we shall show
the existence of such surfaces 
with non-vanishing mean curvature and
investigate their properties.
\end{abstract}

\section{Introduction}
\label{intro}
We say that a connected surface immersed in 
a Lorentzian 3-manifold $(M^3,g)$
is of {\it mixed type} 
if both the space-like and time-like point sets
are non-empty.
In general, the mean curvature of
such surfaces 
diverges: for example,
the graph of a smooth function
$t=f(x,y)$ in 
the Lorentz-Minkowski space-time
$(\R^3_1;t,x,y)$ 
gives a space-like (resp. time-like)
surface if $B>0$ (resp. $B<0$),
where
\begin{equation}\label{eq:B}
B:=1-f_x^2-f_y^2.
\end{equation}
In this situation,
the unit normal vector is given by
\begin{equation}\label{eq:nu}
\nu=\frac{1}{\sqrt{|B|}}
(1, f_x,f_y),
\end{equation}
and the mean curvature function is
computed as
\begin{equation}\label{eq:H0}
H=
\frac{\left(f_x^2-1\right)f_{yy} -2 f_x f_y f_{xy}
+\left(f_y^2-1\right) f_{xx}}
{2 |B(x,y)|^{3/2}},
\end{equation}
which is unbounded around the
set $\{B(x,y)=0\}$, in general.

On the other hand, several zero 
mean curvature surfaces 
of mixed type in $\R^3_1$ 
were found in  
\cite{K1}, \cite{G}, \cite{Kl}, \cite{ST}, \cite{KKSY}, 
\cite{FRUYY2}, \cite{FKKRSUYY1} and \cite{FKKRSUYY2}.
Moreover, such examples can be found in other 
space-times:
in fact, a zero mean curvature surface of
mixed type in the
de Sitter 3-space (resp. in the anti-de Sitter 
3-space) is given
in this paper (cf. Example \ref{ex:fds} and 
Example \ref{ex:fads}).
It is known that zero mean 
curvature surfaces in $\R^3_1$
change types across their fold singularities,
except for the special case as in \cite{FKKRSUYY1}.
On the other hand, in \cite{HKS}, it was shown that
space-like non-zero constant mean curvature surfaces 
do not admit fold singularities,
which suggests that 
space-like non-zero constant mean curvature surfaces 
never change types.  More precisely,
the following questions naturally arise:
\begin{itemize} 
\item[(a)] {\it Is there a mixed type
surface with non-zero constant  mean curvature? }
\item[(b)] {\it Is there a mixed type
surface whose mean curvature vector field
is smooth and does not vanish
along the curve of type change?} 
\end{itemize} 
In this paper, we show that 
the answer to Question (a) 
is negative. This is a consequence of
the following assertion:

\begin{Thm}\label{thm:main}
Let $U$ be a connected domain in $\R^2$, and
$f:U\to (M^3,g)$ a real analytic 
immersion into an oriented real analytic 
Lorentzian manifold
$(M^3,g)$.
We denote by $U_+$ $($resp. $U_-)$
the set of points where
$f$ is space-like $($resp. time-like$)$.
Suppose that $U_+,U_-$ are both non-empty,
and the mean curvature function $H$
on $U_+\cup\, U_-$ is bounded.
Then for each $p\in \overline{U_+}\cap \overline{U_-}$,
there exists a sequence $\{p_n\}_{n=1,2,3,..}$
in $U_+$ $($resp. $U_-)$
converging to $p$
so that $\lim_{n\to \infty}H(p_n)=0$, where $\overline{U_+},\overline{U_-}$
are the closures of $U_+,U_-$ in $U$.
\end{Thm}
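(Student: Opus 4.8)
The plan is to convert the two geometric hypotheses into a single real-analytic inequality, and then to read off the behaviour of $H$ from the vanishing orders of two analytic functions along the type-change locus.

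First I would set up a local analytic normal. Fix coordinates $(u,v)$ on $U$ and, using $g$ and its volume form, form the real-analytic normal field $\tilde\nu=f_u\times_g f_v$ determined by $\inner{\tilde\nu}{f_u}=\inner{\tilde\nu}{f_v}=0$ and $\inner{\tilde\nu}{\tilde\nu}=-\lambda$, where
\[
\lambda:=\det(f^*g)=EG-F^2,\qquad E=\inner{f_u}{f_u},\ F=\inner{f_u}{f_v},\ G=\inner{f_v}{f_v}.
\]
Then $U_+=\{\lambda>0\}$, $U_-=\{\lambda<0\}$, and the type-change curve lies in $Z:=\{\lambda=0\}$; since $U_\pm\neq\emptyset$, $\lambda\not\equiv 0$ and $Z$ is nowhere dense. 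With unit normal $\nu=\tilde\nu/\sqrt{|\lambda|}$ and the real-analytic quantities $\tilde L=\inner{\nabla_{\partial_u}f_u}{\tilde\nu}$, $\tilde M=\inner{\nabla_{\partial_u}f_v}{\tilde\nu}$, $\tilde N=\inner{\nabla_{\partial_v}f_v}{\tilde\nu}$, a direct computation gives, up to the usual sign conventions,
\[
H=\frac{\epsilon}{2}\,\frac{G\tilde L-2F\tilde M+E\tilde N}{\lambda\sqrt{|\lambda|}},\qquad
\epsilon=\inner{\nu}{\nu}=-\sgn\lambda .
\]
Setting $\Psi:=G\tilde L-2F\tilde M+E\tilde N$, which is real analytic on all of $U$, the factor $\epsilon\,\sgn\lambda\equiv-1$ on $U_+\cup U_-$ collapses the signs and yields the key identity $\Psi=-2H\,|\lambda|^{3/2}$, equivalently $\Psi^2=4H^2|\lambda|^3$.

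Next I would extract the consequences of boundedness. If $|H|\le C$, then $|\Psi|\le 2C|\lambda|^{3/2}$ on the dense set $U_+\cup U_-$, hence on all of $U$ by continuity; in particular $\Psi$ vanishes on $Z$, and on $U_+\cup U_-$ we have the clean formula $H^2=\Psi^2/(4|\lambda|^3)$. Thus the theorem reduces to the purely real-analytic claim: for $p\in\overline{U_+}\cap\overline{U_-}$ there is a sequence in $U_+$ (resp.\ $U_-$) along which $\Psi^2/|\lambda|^3\to0$. Arguing by contradiction, suppose $|H|\ge\delta>0$ on $U_+\cap B(p,r)$, so that $4\delta^2\le \Psi^2/\lambda^3\le 4C^2$ there. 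Along any real-analytic arc $\gamma:[0,\tau)\to\overline{U_+}$ with $\gamma(0)=p$ and $\gamma((0,\tau))\subset U_+$ (supplied by the Curve Selection Lemma), the two-sided bound forces $2\,\op{ord}_t\Psi(\gamma)=3\,\op{ord}_t\lambda(\gamma)$, so $\op{ord}_t\lambda(\gamma)$ is even and the leading coefficients are locked together. The contradiction must therefore come from arcs that approach a component of $Z$ across which $\lambda$ genuinely changes sign.

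To organise this uniformly I would resolve the singularities of $\lambda$: by a proper real-analytic modification $\pi$ one may assume $\lambda\circ\pi$ and $\Psi\circ\pi$ are simultaneously normal crossings, $\lambda\circ\pi=(\text{unit})\prod_i x_i^{a_i}$ and $\Psi\circ\pi=(\text{unit})\prod_i x_i^{b_i}$. The inequality gives $b_i\ge\tfrac32 a_i$ along every divisor meeting $\overline{\pi^{-1}(U_+)}$, and there $H^2\circ\pi=(\text{unit})\prod_i x_i^{\,2b_i-3a_i}$. Because $p\in\overline{U_+}\cap\overline{U_-}$, the sign of $\lambda$ changes near $\pi^{-1}(p)$, so some divisor $\{x_j=0\}$ bordering $\pi^{-1}(U_+)$ has $a_j$ \emph{odd}; then, since $b_j$ is an integer with $b_j\ge\tfrac32 a_j$,
\[
2b_j-3a_j\ \ge\ 2\Big\lceil\tfrac32 a_j\Big\rceil-3a_j\ =\ 1\ >\ 0 ,
\]
so $H^2\circ\pi\to0$ as one approaches $\{x_j=0\}$ inside $\pi^{-1}(U_+)$; pushing such a sequence forward by $\pi$ produces $p_n\in U_+$ with $p_n\to p$ and $H(p_n)\to0$, and the $U_-$ case is identical. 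I expect this last step to be the main obstacle: one must control the parity and size of the vanishing orders of $\lambda$ and $\Psi$ on the divisors adjacent to $\pi^{-1}(U_+)$ and verify that the sign-change hypothesis always supplies a divisor of odd $\lambda$-order there. In the non-degenerate case $\nabla\lambda(p)\neq0$ this is immediate ($a_j=1$ transversally, so no resolution is needed), and since $U\subset\R^2$ one may in general replace the resolution by Puiseux parametrizations of the branches of $Z$ through $p$; the genuine difficulty lies entirely in the degenerate configurations of $Z$, as already the model $\lambda=u^2-v^2$ shows, where the $\tfrac32$-power bound forces $\Psi$ to vanish to order $4$ rather than the naive order $3$.
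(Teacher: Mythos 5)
Your core mechanism is exactly the paper's: your $\Psi$ is the paper's $\alpha$ (Lemma \ref{lem:A}), your identity $H^2=\Psi^2/(4|\lambda|^3)$ is \eqref{eq:H}, and the punch line --- boundedness forces $2\,\op{ord}\Psi\ge 3\,\op{ord}\lambda$ while a sign change forces $\op{ord}\lambda$ odd, hence strict inequality and $H\to 0$ --- is the paper's parity argument verbatim. Where you differ is the reduction to one variable. The paper does it with an elementary trick: join a point of $U_+$ to a point of $U_-$ by a smooth curve inside the given neighborhood, truncate its Fourier series to get a \emph{real-analytic} curve $\gamma_N$ still satisfying $\beta(\gamma_N(0))>0>\beta(\gamma_N(2\pi))$, and apply the intermediate value theorem to $\hat\beta=\beta\circ\gamma_N$: at a sign-changing zero $s_j$ the order of vanishing is odd automatically, and one-variable asymptotics of $\hat\alpha/|\hat\beta|^{3/2}$ produce the required points on both sides of $s_j$ at once. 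No structure theory of the degeneracy locus $Z$ is needed at all. You instead resolve $\{\lambda\Psi=0\}$ to normal crossings and run the parity count on divisor multiplicities --- correct in principle, but it invokes Hironaka-type resolution for real-analytic functions, a vastly heavier tool for a two-variable statement.

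Moreover, the step you yourself flag is a real gap, not a formality: the assertion that ``some divisor bordering $\pi^{-1}(U_+)$ has $a_j$ odd,'' together with the unstated but necessary requirement that this divisor can be approached over points mapping arbitrarily close to $p$, does not follow from normal crossings plus adjacency alone. Indeed, at a divisor adjacent only to $\pi^{-1}(U_+)$ one can have $a_i$ even and $2b_i=3a_i$, in which case $\Psi^2/|\lambda|^3$ is bounded away from zero and $H$ does not decay there; decay occurs only at genuinely sign-crossing divisors, and their existence near $\pi^{-1}(p)$ needs an argument. The gap is fillable: by properness choose a neighborhood $W\subset\pi^{-1}(V)$ of the compact fiber $\pi^{-1}(p)$; lifts of sequences in $U_\pm$ converging to $p$ show both $\pi^{-1}(U_+)$ and $\pi^{-1}(U_-)$ meet $W$; a path in $W$ between them, transverse to the divisors and avoiding their crossing points, must cross a divisor where $\lambda\circ\pi$ changes sign, i.e.\ one of odd multiplicity bordering both regions, and approaching it from either side gives both sequences. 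But notice that this repair is itself an intermediate-value argument along a curve --- which is precisely the paper's proof, run downstairs, with the Fourier truncation supplying analyticity of the curve and no resolution anywhere. So your proposal is workable, but the one step you call ``the genuine difficulty'' is the step the paper's construction dissolves for free.
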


There exist space-like and time-like
constant mean curvature 
immersions in $\R^3_1$ which are 
not of mixed type although
their induced metrics degenerate
along certain smooth curves
(cf. Examples \ref{ex:fp} and \ref{ex:fh}
in Section 2).
Also, there are similar such examples of
space-like  constant mean curvature one
surfaces in 
the
de Sitter 3-space $S^3_1$ 
with singularities which are not of mixed type 
(\cite{FKKRUY}).
The existence of such examples implies that
we cannot drop the assumption that 
both
$U_+,U_-$ are non-empty.
The proof of Theorem \ref{thm:main} is given 
in Section 2.

On the other hand, we show that 
the answer to Question
(b) 
is affirmative.
In fact, we show in Section 3 that 
the mean curvature vector fields 
of real analytic surfaces of mixed type
with bounded mean curvature functions
can be analytically
extended across the sets of type change
under a suitable genericity assumption
(cf. Proposition \ref{prop:tc}).
Moreover, we show the following:

\begin{Thm}\label{thm:existence}
There exists a real analytic function $g(x,y)$
on $\R^2$ whose graph realized in $\R^3_1$ 
satisfies the following properties:
\begin{enumerate}
\item the set $\Sigma_g$ of non-degenerate points of
type change of the graph of $g$ is non-empty, 
and the induced metric of the graph of $g$ 
is non-degenerate on $\R^2\setminus \Sigma_g$,
\item the mean curvature function of
the graph of $g$ is bounded on $\R^2\setminus \Sigma_g$.
\item 
the mean curvature vector field can be
extended to $\Sigma_g$ real analytically,
and does not vanish at each point of $\Sigma_g$. 
\end{enumerate}
\end{Thm}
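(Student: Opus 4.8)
The plan is to realize all three conditions through the single quantity
$$N:=(g_x^2-1)g_{yy}-2g_xg_yg_{xy}+(g_y^2-1)g_{xx},$$
the numerator in \eqref{eq:H0}, so that $H=N/(2|B|^{3/2})$. Combining \eqref{eq:nu} and \eqref{eq:H0}, a direct computation gives the mean curvature vector of the graph as
$$\vec H=H\nu=\frac{N}{2B^2}(1,g_x,g_y),$$
and since $(1,g_x,g_y)$ is real analytic and nowhere zero, conditions (2) and (3) are governed entirely by the scalar $N/B^2$. Writing $\Sigma_g=\{B=0\}$, condition (1) asks that $dB\neq0$ there, so that $\Sigma_g$ is a regular analytic curve across which $B$ changes sign; condition (3) asks that $N/B^2$ extend real analytically across $\Sigma_g$ and be nonvanishing on it. If $N=hB^2$ with $h$ analytic, then $H=\tfrac12h\,|B|^{1/2}\to0$ on $\Sigma_g$, so (2) is automatic and $\vec H|_{\Sigma_g}=\tfrac12h\,(1,g_x,g_y)$, whence (3) holds precisely when $h|_{\Sigma_g}\neq0$. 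Thus I reduce the theorem to producing an analytic $g$ for which $N$ vanishes to order \emph{exactly} $2$ along a regular sign-changing curve $\Sigma_g$.

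Next I record the decisive identity on $\Sigma_g$. Since $\nabla B=-2\Hess(g)\nabla g$ and $|\nabla g|=1$ there, writing $v=\nabla g$ and using $\Delta g=\inner{v}{\Hess(g)v}+\inner{v^\perp}{\Hess(g)v^\perp}$ gives $N|_{\Sigma_g}=-\inner{v}{\Hess(g)v}=\tfrac12\inner{\nabla g}{\nabla B}$. As $\nabla B$ is normal to $\Sigma_g$, this shows $N|_{\Sigma_g}=0$ if and only if $\nabla g$ is tangent to $\Sigma_g$. Moreover, parametrizing $\Sigma_g$ by arclength with $\nabla g$ equal to the unit tangent $\tau$, differentiation yields $\tfrac{d}{ds}\nabla g=\Hess(g)v=-\tfrac12\nabla B$ while $\tfrac{d}{ds}\tau=\kappa\,\mathbf n$, so $\nabla B=-2\kappa\,\mathbf n$. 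Hence, once $\nabla g$ is tangent to $\Sigma_g$, the non-degeneracy $dB\neq0$ holds exactly when $\Sigma_g$ has nonzero curvature. This forces $\Sigma_g$ to be curved and, in fact, non-compact: a unit gradient tangent along a closed curve would give $\oint_{\Sigma_g}\inner{\nabla g}{d\ell}=\pm(\text{length})\neq0$, contradicting the exactness of $dg$. In particular no example can be built along a straight line (where this mechanism degenerates) or along a circle.

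Accordingly I would fix an analytic, non-compact, curved arc $\Gamma$ with nowhere-vanishing curvature and build $g$ from its normal Taylor expansion in Fermi coordinates $(s,n)$, namely $g=g_0(s)+g_1(s)n+g_2(s)n^2+g_3(s)n^3+\cdots$. Imposing $|\nabla g|=1$ and tangency of $\nabla g$ on $\{n=0\}$ fixes the $1$-jet $(g_0,g_1)$ and, by the computation above, already forces $B|_\Gamma=0$, $dB\neq0$, and $N|_\Gamma=0$. Because $B$ and $N$ are purely algebraic in the derivatives of $g$ (there is no PDE to solve), the remaining normal coefficients $g_2,g_3,\dots$ are free; and since $N$, $\partial_nN$, $\partial_n^2N$ along $\Gamma$ involve successively higher normal jets, I can first choose $g_2,g_3$ to make $\partial_nN|_\Gamma=0$, and then a further coefficient to make $\partial_n^2N|_\Gamma\neq0$, with no conflict between the two steps. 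Summing the series (or truncating to a polynomial in $n$ with analytic coefficients) yields a real analytic germ with $N=hB^2$ and $h|_\Gamma\neq0$. By Proposition \ref{prop:tc} the mean curvature vector then extends analytically across $\Gamma$, and $\vec H|_\Gamma=\tfrac12h\,(1,g_x,g_y)\neq0$, giving (3); (1) and (2) hold by construction, consistently with Theorem \ref{thm:main} since $H\to0$ along $\Gamma$.

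The main obstacle is globalization: upgrading this germ near $\Gamma$ to a single real analytic function on all of $\R^2$ whose only degeneracy locus is a non-empty $\Sigma_g$, with both the space-like and time-like regions non-empty, while keeping the order-$2$ coefficient $h$ nonvanishing and the series convergent. I expect the cleanest route is to dispense with Fermi coordinates and instead exhibit an explicit closed form for $g$ (for instance adapted to a parabolic or otherwise non-compact $\Gamma$) and then verify $N=hB^2$ with $h|_{\Sigma_g}\neq0$ by direct computation. The analysis above tells me exactly which tangency and curvature features such a formula must have, and in particular precisely why the naive straight-line and closed-curve ansätze cannot work.
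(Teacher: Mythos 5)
Your opening reduction is correct and cleanly done: with $N$ the numerator of \eqref{eq:H0}, the mean curvature vector is $\frac{N}{2B^2}(1,g_x,g_y)$, so the theorem is equivalent to finding a global real analytic $g$ whose $N$ vanishes to order exactly $2$ along a regular sign-changing curve $\{B=0\}$, with no other degeneracies. Your local identities are also sound and genuinely illuminating: on $\Sigma_g$ one indeed has $N=\tfrac12\langle\nabla g,\nabla B\rangle$, so vanishing of $N$ forces $\nabla g$ to be tangent to $\Sigma_g$, and then $\nabla B=-2\kappa\mathbf{n}$ forces $\Sigma_g$ to have nonzero curvature and (by exactness of $dg$) to be non-compact --- which explains why the paper's example ends up with type-change curves $x=\pm\cosh y$. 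But the proof has a genuine gap, and you name it yourself: the globalization step is never carried out, and that step \emph{is} the theorem. The statement demands a real analytic function on all of $\R^2$ whose metric degenerates \emph{only} on $\Sigma_g$; a jet constructed in Fermi coordinates lives only in a tubular neighborhood of $\Gamma$, real analytic functions admit no cutoffs or partitions of unity, and even inside the tube you have not controlled where else $B$ vanishes. Moreover, the local step itself is asserted rather than proved: the claim that one can choose $g_2,g_3,\dots$ to kill $\partial_n N|_\Gamma$ and then make $\partial_n^2 N|_\Gamma\neq 0$ ``with no conflict'' needs verification, since $N$ and $B$ are built from the same jets of $g$, and one must check that these normalizations preserve $\nabla B\neq 0$ and the sign change of $B$.

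The missing idea, which is how the paper resolves exactly this difficulty, is to perturb a known \emph{global} example rather than build one from scratch. The paper starts from Kobayashi's zero mean curvature graph $f_K(x,y)=x\tanh y$ (the helicoid of the 2nd kind), whose degeneracy set $x=\pm\cosh y$ already consists of non-degenerate points of type change but for which the extended mean curvature vector vanishes (since $A\equiv 0$), and sets
\begin{equation*}
g:=f_K+\psi(B),\qquad \psi(t)=c\tanh^3 t,\qquad B=1-(f_K)_x^2-(f_K)_y^2 .
\end{equation*}
Because $\psi,\psi',\psi''$ vanish at $0$, a computation modulo $B^3$ shows the new numerator $\tilde A$ is divisible by $B^2$, with quotient $-18c/\cosh^4 y\neq 0$ on the type-change set, so the extended mean curvature vector is nonvanishing there; and for $c$ sufficiently small one checks $\tilde B/B$ has no zeros, so the degeneracy set of $g$ coincides with $\{B=0\}$ and is everywhere non-degenerate. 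This yields, in closed form, precisely the ``explicit formula adapted to a non-compact curved $\Gamma$'' that you anticipated would be needed: instead of prescribing the curve and solving for the jets, one deforms a global zero mean curvature surface so that the order of vanishing of the numerator drops from infinite (identically zero) to exactly two. Your analysis of necessary conditions is a useful complement to this, but without such a global construction the existence claim remains unproved.
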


This suggests that surfaces with smooth mean 
curvature vector fields
form an important sub-class of 
the set of mixed type surfaces.

\section{Behavior of mean curvature 
along curves of type change
}
\label{sec:1}

Let $(M^3,g)$ be an oriented real analytic 
Lorentzian $3$-manifold.
Then, the vector product 
$\mb v\times_g \mb w$
is defined 
for linearly independent tangent vectors $\mb v,\mb w$
at $p\in M^3$, satisfying the 
following three properties:
\begin{enumerate}
\item $\mb v\times_g \mb w$
is orthogonal to $\mb v$ and
$\mb w$,
\item $\{\mb v, \mb w, \mb v\times_g \mb w\}$
is a basis of the tangent space $T_pM$
which is compatible with the
orientation of $M^3$,
\item it holds that
$$
g_p(\mb v\times_g \mb w,\mb v\times_g \mb w)
=-g_p(\mb v,\mb v)g_p(\mb w,\mb w)+g_p(\mb v,\mb w)^2.
$$
\end{enumerate}
For each tangent vector
$\mb v \in T_pM^3$ ($p\in M^3$),
we set
$$
|\mb v|:=\sqrt{|g_p(\mb v,\mb v)|}.
$$
We fix a domain $U$ in $\R^2$.
Let $f:U\to M^3$ be a real analytic immersion.
Set
$
f_{u}:=df(\partial_u)$,
$
f_{v}:=df(\partial_v)
$,
where 
$\partial_u:=\partial/\partial u$,
$\partial_v:=\partial/\partial v$.
Using three real analytic functions
$$
g_{11}:=g(f_u,f_u),\quad
g_{12}=g_{21}=  g(f_u,f_v),\quad
g_{22}:=g(f_v,f_v)
$$
on $U$, we define
a function $\beta:U\to \R$ by
\begin{equation}\label{def:beta}
\beta:=g_{11}g_{22}-g_{12}^2.
\end{equation}
Then
$$
U_+:=\{p\in U\,;\,\beta(p)>0\},\qquad
U_-:=\{p\in U\,;\,\beta(p)<0\}
$$
give the set of space-like points
and 
the set of time-like points, respectively.
The unit normal vector field 
\begin{equation}\label{eq:omega}
\nu=\frac{f_u\times_g f_v}{|f_u\times_g f_v|}
\end{equation}
of $f$
is well-defined on $U_+\cup U_-$.
Using this, we set
$$
h_{11}:=g(f_{uu},\nu),\quad
h_{12}=h_{21}=g(f_{uv},\nu),\quad
h_{22}:=g(f_{vv},\nu),
$$
where 
$$
f_{uu}=\nabla_{\partial_u}f_u,
\quad
f_{uv}=\nabla_{\partial_v}f_u=\nabla_{\partial_u}f_v,
\quad
f_{vv}=\nabla_{\partial_v}f_v,
$$
and $\nabla$ is the Levi-Civita connection
of the Lorentzian manifold $(M^3,g)$.
Each $h_{ij}$ ($i,j=1,2$)
is a function defined on $U_+\cup U_-$.
The mean curvature function $H$ 
is also defined on $U_+\cup U_-$,
and is given by
\begin{equation}\label{eq:H}
H:=
\frac{g_{11} h_{22}-2 g_{12} 
h_{12}+g_{22} h_{11}}{2|\beta|}
=\frac{\alpha}{2|\beta|^{3/2}},
\end{equation}
where
\begin{equation}\label{def:alpha}
\alpha:=\sqrt{|\beta|}(g_{11} h_{22}-2 g_{12} 
h_{12}+g_{22} h_{11}).
\end{equation}

Then the following assertion holds:

\begin{Lemma}\label{lem:A}
The function $\alpha:U_+\cup U_-\to \R$
can be analytically extended to $U$.
\end{Lemma}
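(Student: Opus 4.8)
The plan is to produce an explicit formula for $\alpha$ that is manifestly real analytic on all of $U$ and that reduces to \eqref{def:alpha} on $U_+\cup U_-$. The whole argument rests on the observation that the factor $\sqrt{|\beta|}$ appearing in the definition of $\alpha$ is there precisely to cancel the normalization of $\nu$, and that the unnormalized vector product $f_u\times_g f_v$ survives across the type-change set.

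First I would apply property (3) of the vector product to $\mb v=f_u$ and $\mb w=f_v$ to obtain $g(f_u\times_g f_v,\,f_u\times_g f_v)=-\beta$, hence $|f_u\times_g f_v|=\sqrt{|\beta|}$ on $U_+\cup U_-$. This lets me rewrite the unit normal \eqref{eq:omega} as $\nu=(f_u\times_g f_v)/\sqrt{|\beta|}$, and therefore each second fundamental form coefficient as $h_{ij}=g(f_{ij},\,f_u\times_g f_v)/\sqrt{|\beta|}$. Substituting these into \eqref{def:alpha}, the explicit prefactor $\sqrt{|\beta|}$ cancels the $1/\sqrt{|\beta|}$ carried by each $h_{ij}$, leaving
$$
\alpha=g_{11}\,g(f_{vv},f_u\times_g f_v)-2g_{12}\,g(f_{uv},f_u\times_g f_v)+g_{22}\,g(f_{uu},f_u\times_g f_v),
$$
an identity valid wherever $\beta\neq 0$.

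The remaining task---and the only real point---is to see that the right-hand side of this identity is real analytic on the whole of $U$, not merely on $U_+\cup U_-$. The coefficients $g_{ij}$ and the covariant derivatives $f_{uu},f_{uv},f_{vv}$ are clearly real analytic on $U$, so everything reduces to the analyticity of the \emph{unnormalized} product $f_u\times_g f_v$ across the set $\{\beta=0\}$. Here I would use that the vector product is defined for any linearly independent pair, and that $f$, being an immersion, has $f_u,f_v$ linearly independent at every point of $U$, including the curve of type change. To make the analyticity transparent I would realize $\mb v\times_g\mb w$ through the Lorentzian volume form $\mu$ of $(M^3,g)$ via $g(\mb v\times_g\mb w,\,\cdot)=\mu(\mb v,\mb w,\cdot)$; since $g$ is nowhere degenerate, both $\mu$ and the inverse metric needed to raise the index are real analytic on $M^3$, so $(u,v)\mapsto f_u\times_g f_v$ is a real analytic tangent-vector field along $f$ on all of $U$.

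I expect the cancellation step to be routine; the one place demanding care is the last paragraph, namely checking that passing to the intrinsic volume-form description of $\times_g$ both reproduces the three defining properties and is genuinely analytic through $\{\beta=0\}$. Once this is secured, the displayed formula furnishes a real analytic function on $U$ that agrees with $\alpha$ on $U_+\cup U_-$, which is exactly the claimed extension.
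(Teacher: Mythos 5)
Your proposal is correct and follows essentially the same route as the paper: the paper's proof also sets $\tilde\nu:=f_u\times_g f_v$, notes $\beta=-g(\tilde\nu,\tilde\nu)$ and $\nu=\tilde\nu/\sqrt{|\beta|}$, and cancels the $\sqrt{|\beta|}$ to write $\alpha=g(f_{vv},\tilde\nu)g_{11}-2g(f_{uv},\tilde\nu)g_{12}+g(f_{uu},\tilde\nu)g_{22}$, which is analytic on all of $U$. Your additional justification of the analyticity of $f_u\times_g f_v$ across $\{\beta=0\}$ via the volume-form identity $g(\mb v\times_g\mb w,\cdot)=\mu(\mb v,\mb w,\cdot)$ merely makes explicit a point the paper leaves implicit (and correctly sidesteps the fact that the orientation-compatibility property of the triple fails where the tangent plane is light-like).
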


\begin{proof}
We set
$\tilde \nu:=f_u\times_g f_v$.
Then 
$$
\beta=-g(f_u\times_g f_v,f_u\times_g f_v)
$$ 
and
$\nu=\tilde \nu/\sqrt{|\beta|}$ holds
(cf. \eqref{eq:omega}).
Therefore, we have that
\begin{align*}
\alpha&  =
\sqrt{|\beta|}
\biggl(g(f_{vv},\nu)g_{11}
-2g(f_{uv},\nu)g_{12}
+g(f_{uu},\nu)g_{22}\biggr) \\
&=
g(f_{vv},\tilde \nu)g_{11}
-2g(f_{uv},\tilde \nu)g_{12}
+g(f_{uu},\tilde \nu)g_{22},
\end{align*}
proving the assertion.
\end{proof}

Using the lemma, 
we now give the proof of
Theorem \ref{thm:main}:

\begin{proof}[Proof of Theorem \ref{thm:main}]
We may assume that the mean curvature function $H$ is not identically zero.
Let $(x^1,x^2)$ be the coordinates of $U$.
We fix a point
$p\in \overline{U_+} \cap \overline{ U_-}$.
Let $\epsilon>0$ be an arbitrary positive number
and $V$ a neighborhood of $p$.
It is sufficient to show that
there exist points $q_+\in V_+$
and $q_-\in V_-$ such that
$|H(q_+)|$ and $|H(q_-)|$ are both
less than $\epsilon$.
We may assume that $V$ is connected.
If $\beta\ge 0$ or $\beta \le 0$
on $V$, this contradicts
the fact that 
$p\in \overline{U_+} \cap \overline{ U_-}$.
So, we can take two points $q_0,q_1\in V$
such that $\beta(q_0)>0$ and $\beta(q_1)<0$.
We then take a smooth curve $\gamma(s)$
($s\in [0,2\pi]$) on $V$
such that $\gamma(0)=q_0$ and
$\gamma(2\pi)=q_1$.
Since the image of $\gamma$ lies in $V$,
we can write $\gamma=(\gamma^1,\gamma^2)$
and each $\gamma^i$ ($i=1,2$)
has the following Fourier series
expansion:
$$
\gamma^i(s)=u^i_0+\sum_{k=1}^\infty
\left( 
u_k^i \cos k s+v_k^i \sin k s
\right) \qquad (i=1,2).
$$
We then set
$$
\gamma^i_N(s)=u^i_0+\sum_{k=1}^N
\left( 
u_k^i \cos k s+v_k^i \sin k s
\right)\qquad (i=1,2),
$$
where $N$ is a
sufficiently
large positive integer.
Then the real analytic curve
defined by $\gamma_N(s):=(\gamma^1_N(s),\gamma^2_N(s))$
satisfies
\begin{equation}\label{eq:pm}
\beta(\gamma_N(0))>0,
\qquad
\beta(\gamma_N(2\pi))<0.
\end{equation}
Since 
$$
\hat\beta(s):=\beta(\gamma_N(s)) \qquad (0\le s \le 2\pi)
$$ 
is a real analytic function
defined on $[0,2\pi]$,
the set of zeros  of the function $\hat \beta(s)$
consists of a finite set of points
$$
0<s_1<\cdots<s_n<2\pi.
$$
By \eqref{eq:pm},
we can choose the number $j$
such that the sign of $\hat\beta(s)$ changes 
from positive to negative
at $s=s_j$.
Then there exists a positive integer $m$
such that
$$
\lim_{s\to s_j}\frac{\hat\beta(s)}{(s-s_j)^{m}}=b \,(\ne 0),
$$
where $b$ is a non-zero real number.
Since $\hat\beta(s)$ changes sign at $s=s_j$,
the integer $m$ is odd.
By Lemma \ref{lem:A}, we
may regard $\alpha$ as a real analytic
function on $U$.
So we set
$$
\hat\alpha(s):=\alpha(\gamma_N(s)).
$$
By \eqref{eq:H},
we have that
$$
H(\gamma_N(s)):=\frac{\hat \alpha(s)}{2|\hat \beta(s)|^{3/2}}
$$
for $s\ne s_1,...,s_n$.
Since $H$ is bounded,
we have
$
\hat\alpha(s_j)=0.
$
Since $\hat \alpha(s)$ is a real analytic function,
there exists a positive integer $\ell$
such that
$$
\lim_{s\to s_j}\frac{\hat \alpha(s)}{(s-s_j)^{\ell}}=a\, (\ne 0),
$$
where $a$ is a non-zero real number.
Then it holds that
$$
\lim_{s\to s_j}|s-s_j|^{(3m/2)-\ell}
|H(\gamma_N(s))|=
\frac{|a|}{|b|^{3/2}}\,(\ne 0).
$$
Since $H$ is bounded,  we have
$2\ell\ge 3m$.
Moreover, since $m$ is odd,
we have
$
2\ell> 3m.
$
Then we have
$
\lim_{s\to s_j}
|H(\gamma_N(s))|=0.
$
In particular, if we set
$$
q_+:=\gamma_N(s_j-\delta),\qquad
q_-:=\gamma_N(s_j+\delta),
$$
then $|H(q_+)|$ and $|H(q_-)|$
are less than $\epsilon$ for
sufficiently small $\delta>0$.
So we get the assertion.
\end{proof}

As a consequence,
we get the following corollary:

\begin{Cor}\label{cor:main}
Under the assumption of Theorem \ref{thm:main},
the function $\alpha:U_+\cup U_-\to \R$
can be analytically extended to $U$
and vanishes on $\overline{U_+}\cap \overline{U_-}$.
\end{Cor}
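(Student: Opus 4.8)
The first assertion---that $\alpha$ extends real analytically to all of $U$---is precisely the content of Lemma \ref{lem:A}, so nothing new is needed there; I will denote this extension again by $\alpha$. The substance of the corollary is the claim that this extended $\alpha$ vanishes identically on $\overline{U_+}\cap\overline{U_-}$, and my plan is to deduce this directly from the boundedness of $H$ together with the defining identity \eqref{eq:H}, \emph{without} re-running the delicate order-of-vanishing analysis used in the proof of Theorem \ref{thm:main}.

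First I would record that $\beta\equiv 0$ on $\overline{U_+}\cap\overline{U_-}$. Indeed, $\beta$ is continuous (even real analytic) on $U$, and by definition $\beta>0$ on $U_+$ and $\beta<0$ on $U_-$. Hence any $p\in\overline{U_+}$ satisfies $\beta(p)\ge 0$ and any $p\in\overline{U_-}$ satisfies $\beta(p)\le 0$; a point lying in both closures therefore has $\beta(p)=0$.

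Next, fix $p\in\overline{U_+}\cap\overline{U_-}$. Since $p\in\overline{U_+}$, choose a sequence $\{p_n\}\subset U_+$ with $p_n\to p$. On $U_+\cup U_-$ the identity \eqref{eq:H} gives
$$
\alpha(p_n)=2\,H(p_n)\,|\beta(p_n)|^{3/2}.
$$
By hypothesis $H$ is bounded on $U_+\cup U_-$, say $|H|\le C$, while $|\beta(p_n)|^{3/2}\to|\beta(p)|^{3/2}=0$ by the previous step and the continuity of $\beta$. Therefore $\alpha(p_n)\to 0$. Because the extended $\alpha$ is continuous on $U$, we conclude $\alpha(p)=\lim_{n\to\infty}\alpha(p_n)=0$, which is the desired vanishing.

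The argument is short, and the only point requiring a little care is to work along a sequence approaching $p$ from \emph{within} $U_+\cup U_-$ (where the identity $\alpha=2H|\beta|^{3/2}$ is literally valid) rather than trying to evaluate the separate factors $H$ and $|\beta|^{3/2}$ at $p$ itself, where $H$ need not be defined. Boundedness of $H$ then does all the work, so the finer conclusion $\lim_{n\to\infty}H(p_n)=0$ furnished by Theorem \ref{thm:main} is not even needed for this corollary.
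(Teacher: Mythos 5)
Your proof is correct and is essentially the paper's own argument: both rest on Lemma \ref{lem:A} for the extension, the identity $\alpha = 2H|\beta|^{3/2}$ from \eqref{eq:H}, the observation that $\beta=0$ on $\overline{U_+}\cap\overline{U_-}$, and the boundedness of $H$; the paper merely states this as a contradiction (``if $\alpha(p)\ne 0$ then $H$ is unbounded, since $\beta(p)=0$'') where you run the same reasoning directly via a sequence. Your closing remark is also accurate: like the paper's proof, yours uses only the hypotheses of Theorem \ref{thm:main}, not its conclusion.
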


\begin{proof}
By Lemma \ref{lem:A},
the function $\alpha$ can be
analytically extended to $U$.
Suppose that
 $\alpha(p)\ne 0$ for 
$p\in \overline{U_+}\cap \overline{U_-}$.
Then the mean curvature function
cannot be bounded, since $\beta(p)=0$.
\end{proof}

We give here several examples:

\begin{Exa}[A space-like CMC
surface with parabolic symmetry]\
\label{ex:fp}
Consider the map
$f_P:\R^2\to \R^3_1$ such that
$$
f_P(u,v):=\left(-\eta(v) +u^2 v+v,
-\eta(v) +u^2 v-v,2 u v\right),
$$
where
$$
\eta(v):=
\frac{1}{2} \left(\arctan(v)-\frac{v}{v^2+1}\right),
\qquad \left|\arctan(v)\right|<\frac{\pi}2.
$$
This surface has singularities on
the $u$-axis. Moreover, the inverse image
$f^{-1}_P(\{\mb 0\})$ coincides with
the $u$-axis, where $\mb 0:=(0,0,0)$.
One can easily check that
$f_P$ gives a space-like immersion
of constant mean curvature $1/2$
on $\R^2\setminus \{v=0\}$.
Moreover, the image of $f_P$ 
is contained
in the set (cf. Figure 1, left)
$$
\mathcal P:=\left\{
(t,x,y)\in \R^3_1\,;\,
-t^2+x^2+y^2=2(t-x)\eta\left(\frac{t-x}2\right)
\right\}.
$$
The light-like line 
$$
L:=\{(c,c,0)\,;\,c\in \R \}
=\left\{\lim_{u\to \infty}f_P(u,\frac{c}{u^2})\,;\, 
c\in\R\right\}
$$
is contained in $\mathcal P$,
and the image of $f$ coincides with
$\mathcal P\setminus L$.
The set $\mathcal P$ itself is
a surface in $\R^3_1$
without self-intersections
which has a cone-like singular point
at the origin $\mb 0$, and
has bounded mean curvature function
on $\mathcal P\setminus\{\mb 0\}$.
Moreover, the induced metric on $\mathcal P$
degenerates only on the line $L$.
This implies that
we cannot drop the assumption that
$U_+,U_-$ are non-empty
in the statement of Theorem \ref{thm:main}.
This example is an analogue of
the maximal surface 
called {\it the Enneper surface
of the 2nd kind}
or
{\it parabolic catenoid}
(cf. \cite{K1}, \cite{FKKRSUYY1}).
\end{Exa}

\begin{figure}[htb]
\begin{center}
 \begin{tabular}{{c@{\hspace{20mm}}c}}
  \resizebox{3.8cm}{!}{\includegraphics{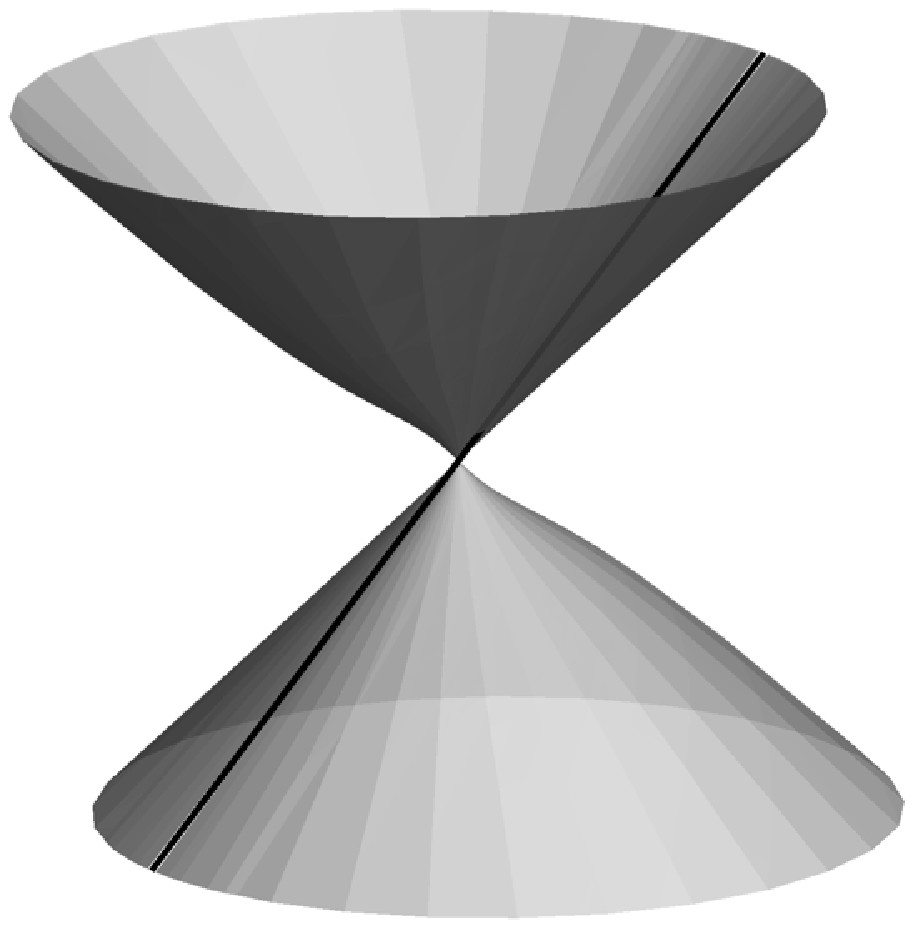}}&
  \resizebox{5.2cm}{!}{\includegraphics{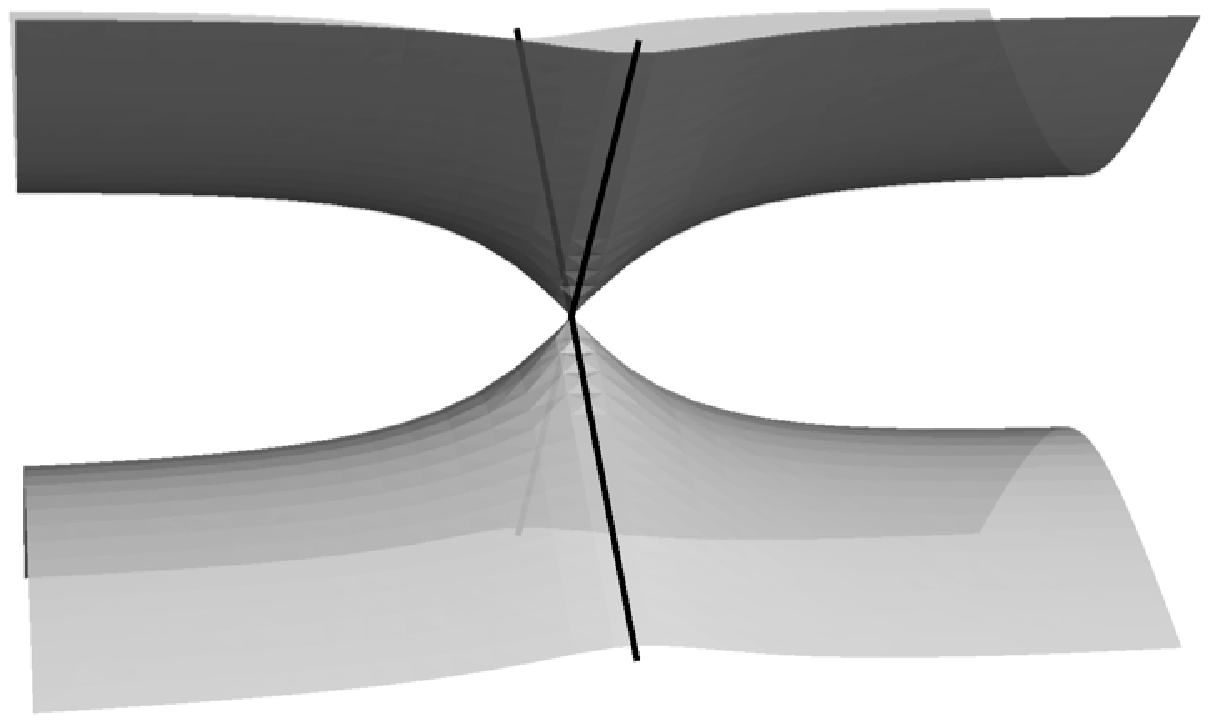}} 
 \end{tabular}
 \caption{
The figure of
 $\mathcal{P}$ (left) and $\mathcal{H}$ (right).
}
\label{fig:P_H}
\end{center}
\end{figure}

\begin{Exa}[A space-like CMC surface 
with hyperbolic symmetry]\
\label{ex:fh}
We next consider the map
defined by
$$
f_H(u,v):=(v \cosh u,v \sinh u,\phi(v))
\qquad ((u,v)\in \R\times (-1,1)),
$$
where
$$
\phi(v):=\log\left(\frac{1+v}{1-v}\right)-v.
$$
Like the case of $f_P$, this surface 
has singularities on the $u$-axis
and  $f^{-1}_H(\{\mb 0\})$ coincides with
the $u$-axis.
One can easily check that
$f_H$ gives a space-like immersion
of constant mean curvature $1/2$
on $\R^2\setminus \{v=0\}$.
Moreover, the image of $f_H$ 
is contained
in the set (cf. Figure 1, right)
$$
\mathcal H:
=\left\{
(t,x,y)\in \R^3_1\,;\,
y=\phi(\pm\sqrt{t^2-x^2})
\right\}
=
\left\{
(t,x,y)\in \R^3_1\,;\,
t^2=x^2+\psi(y)^2
\right\},
$$
where $\psi:\R\to (-1,1)$
is the inverse function of
$\phi:(-1,1)\to \R$.
Two light-like lines 
$$
L_\pm:=\{(c,\pm c,0)\,;\,c\in \R \}
$$
are contained in $\mathcal H$
and
$$
\mathcal H=L_+\cup L_- \cup 
(\mbox{Image of }f_H)\cup (\mbox{Image of }f'_H),
$$
where
$$
f'_H(u,v):=(-v \cosh u,v \sinh u,\phi(v))
\qquad ((u,v)\in \R\times (-1,1)).
$$
Like as in the case of $\mathcal P$,
the set $\mathcal H$ 
has no self-intersections,
and has bounded mean curvature function
on $\mathcal H\setminus\{\mb 0\}$.
 The origin $\mb 0$ is
a cone-like singular point.
Moreover, its induced metric
degenerates along the lines $L_\pm$.
This example is an analogue of
the maximal surface 
called  {\it the catenoid of the
2nd kind} 
or {\it hyperbolic catenoid} 
(cf. \cite{K1}, \cite{FKKRSUYY1}).
\end{Exa}

Similar examples, that is, a family of 
space-like surfaces
with constant mean curvature
one containing light-like lines
in the de Sitter 3-space $S^3_1$
have recently been found in \cite{FKKRUY}.

The following is one typical mixed type surface
whose mean curvature vanishes
identically.

\begin{Exa}\label{eq:Kob}
\label{ex:fk}
Consider the function
$$
f_K(x,y):=x \tanh y.
$$
Then the graph of $f_K$ in $\R^3_1$
gives a zero mean curvature surface,
which is space-like on the set
$U_+:=\{(x,y)\in \R^2\,;\, x^2>\cosh^2 y\}$
and time-like
on the set
$U_-:=\{(x,y)\in \R^2\,;\, x^2<\cosh^2 y\}$.
This example is called the {\it helicoid of the 2nd kind},
which was found by Kobayashi \cite{K1}.
\end{Exa}

On the other hand, we 
can find a similar example
in another space form:

\begin{Exa}
\label{ex:fds}
Consider the 
map $f_Z : \R \times S^1 \rightarrow S^3_1$
given by
\[
  f_Z(u,v):=\left( \sinh u \sin v,\, \cos u \cos v,\, \sin u \cos v,\, \cosh u \sin v \right),
\]
where 
$$
S^3_1:=\{(t,x,y,z)\in \R^4_1\,;\, -t^2+x^2+y^2+z^2=1\}
$$
is the de Sitter 3-space, which is
the space-time of
constant sectional curvature $1$.
Then the first fundamental form of
$f_Z$ is given by 
$ds^2=\cos 2v\, du^2 + dv^2$.
In particular, $f_Z$
is space-like (resp. time-like)
if $\cos 2v>0$
(resp. $\cos 2v<0$).
Moreover, the mean curvature function of
$f_Z$ vanishes identically.
\end{Exa}

\begin{Exa}
\label{ex:fads}
We define an immersion $f_{\rm ads}:\R^2 \rightarrow H^3_1$ by
\[
  f_{\rm ads}(u,v)=
\left( \cosh u \cosh v,
\,\sinh a u\sinh v ,\,\cosh a u\sinh v ,
\,\sinh u \cosh v \right),
\]
where $a= 1/\tanh \alpha$ $(\alpha\neq0)$
is a constant, and
\begin{align*}
  H^3_1&=\left\{ (t,x,y,z)\in \R^4_2\,;\,
-t^2-x^2+y^2+z^2=-1 \right\}
\end{align*}
is the anti-de Sitter 3-space, which is
the space-time of
constant sectional curvature $-1$.
Then the first fundamental form
of $f_{\rm ads}$ is given by
$$
\dfrac{\cosh 2 \alpha - \cosh 2 v}{2\sinh^2\alpha} du^2 + dv^2.
$$
In particular,
$f_{\rm ads}$ is 
space-like (resp. time-like)
if $\cosh 2\alpha>\cosh 2v$
(resp. $\cosh 2\alpha<\cosh 2v$).
\end{Exa}

\section{Properties of points where
surfaces change type}
\label{sec:2}

In this section, we shall investigate
the properties of functions $t=f(x,y)$ 
whose graphs induce mixed type surfaces 
in 
 $\R^3_1$
with bounded mean curvature.

\begin{Def}[cf. {\cite[Definition 2.3]{FKKRSUYY2}}]
\label{def:nondegtc}
 Let $U$ be a domain
in the  $xy$-plane $\R^2$,
and $f:U\to \R$ a
$C^\infty$-function.
We set
$$
  B:=1-f_x^2-f_y^2.
$$
A point $p \in U$ 
is called a {\it non-degenerate point of  type change}
if 
\begin{equation}\label{eq:NB}
     B(p)=0,\qquad \nabla B(p)\ne0
\end{equation}
hold, where $\nabla B:=(B_x,B_y)$.
\end{Def}

By definition, the first fundamental
form of the graph of $f$ is degenerate
at a non-degenerate point of  type change.
We set 
$$
A := (f_x^2-1)f_{yy} -2 f_xf_y f_{xy}+(f_y^2-1)f_{xx}.
$$
Then the functions $A,B$ can be
considered as a 
special case of
the functions (cf.
\eqref{def:beta} and
 \eqref{def:alpha})
$\alpha, \beta$  
by setting $(u,v)=(x,y)$. 
By \eqref{eq:H0}, we have
\begin{equation}\label{eq:H2}
H=\frac{A}{2|B|^{3/2}}.
\end{equation}

\begin{Prop}[cf. Proposition 2.4 in \cite{FKKRSUYY2}]
\label{prop:Gu}
\label{prop:equiv}
Suppose that the mean curvature function of the
graph of $f$ is bounded.
Let
$p\in U$ be a point satisfying
$B(p)=0$. Then the following two
assertions are equivalent:
\begin{enumerate}
\item\label{item:equiv:1} 
the point $p$ is a
non-degenerate point of  type change.
\item\label{item:equiv:2}
$p$ is a
dually regular point in the sense of
\cite{G}, that is,
$p$ is a point where 
$f_{xx}(p)f_{yy}(p)-f_{xy}(p)^2\ne 0$ .
\end{enumerate}
\end{Prop}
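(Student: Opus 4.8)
The plan is to translate the two conditions into linear algebra for the (symmetric) Hessian matrix $\Hess f=\pmt{f_{xx}&f_{xy}\\f_{xy}&f_{yy}}$ and the gradient vector $\mb w:=(f_x,f_y)$, all evaluated at $p$. Since $B(p)=0$ means $f_x(p)^2+f_y(p)^2=1$, the vector $\mb w$ is nonzero at $p$. Differentiating $B=1-f_x^2-f_y^2$ gives $\nabla B=-2(\Hess f)\mb w$, so condition (1), namely $\nabla B(p)\ne 0$, is exactly the statement that $\mb w\notin\ker(\Hess f)$ at $p$; condition (2) is by definition $\det(\Hess f)(p)\ne 0$.

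Next I would feed in the boundedness of $H$. On the zero set $\{B=0\}$ the substitutions $f_x^2-1=-f_y^2$ and $f_y^2-1=-f_x^2$ collapse the defining expression of $A$ into the quadratic form $A=-\,\mb w\cdot(\Hess f)\mb w$. Because $H=A/(2|B|^{3/2})$ remains bounded while $|B|^{3/2}\to 0$ as $p$ is approached through points with $B\ne 0$, continuity of $A$ forces $A(p)=0$; equivalently, $\mb w$ is a nonzero \emph{null vector} of the symmetric quadratic form $\mb x\mapsto \mb x\cdot(\Hess f)\mb x$ at $p$. In the degenerate situation where $B\equiv 0$ near $p$ there are no approaching points, but then differentiating $B\equiv 0$ yields $(\Hess f)\mb w=0$ outright, so both (1) and (2) fail and the equivalence is immediate; isolating this sub-case is the one place that requires separate, if easy, care.

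Finally I would read off the equivalence from the sign of $\det(\Hess f)(p)$. A symmetric form that admits a nonzero null vector cannot be definite, so $\det(\Hess f)(p)\le 0$. If $\det(\Hess f)(p)<0$, the form is indefinite and $\Hess f$ is invertible, so $\mb w\notin\ker(\Hess f)$ and $\det(\Hess f)\ne 0$: both (1) and (2) hold. If $\det(\Hess f)(p)=0$, then $\Hess f$ has rank at most one and its null cone reduces to its kernel, so the null vector $\mb w$ lies in $\ker(\Hess f)$ and $\det(\Hess f)=0$: both (1) and (2) fail. In either case (1) and (2) are equivalent. The conceptual crux is that the single relation $A(p)=0$ extracted from boundedness pins $\mb w$ onto the null cone of $\Hess f$, after which the sign of the determinant governs membership in the kernel (condition (1)) and nondegeneracy (condition (2)) simultaneously; the main obstacle is thus not the clean linear algebra but the careful extraction of $A(p)=0$, including the degenerate sub-case above.
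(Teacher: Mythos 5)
Your argument is correct and complete, and it shares the paper's two essential ingredients---the identity $\nabla B=-2\,\Hess(f)\,\mb w$ with $\mb w:=(f_x,f_y)$ (the paper's \eqref{eq:Bxy} drops the harmless constant factor; yours is the accurate one), and the vanishing $A(p)=0$ forced by boundedness of $H$---but the finishing linear algebra is organized differently. The paper proves \ref{item:equiv:2}$\Rightarrow$\ref{item:equiv:1} exactly as you do in your $\det<0$ case, and proves the contrapositive of the converse by first rotating coordinates so that $f_{xy}(p)=0$, then combining $A(p)=0$ and $B(p)=0$ into $f_x(p)^2f_{xx}(p)+f_y(p)^2f_{yy}(p)=0$ and checking cases to conclude $\Hess(f)\,\mb w=\mb 0$. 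You avoid the coordinate rotation entirely: from $A(p)=-\mb w\cdot\Hess(f)\,\mb w=0$ you read off that $\mb w$ is a nonzero null vector of the Hessian form, hence $\det\Hess(f)(p)\le 0$, and the trichotomy on the sign of the determinant---together with the observation that a degenerate symmetric $2\times2$ form has null cone equal to its kernel---settles both implications at once, which is a genuinely coordinate-free and arguably cleaner way to close the argument. A further point in your favor: the paper obtains $A(p)=0$ by citing Theorem \ref{thm:main}, which as stated applies to points of $\overline{U_+}\cap\overline{U_-}$, whereas the proposition only assumes $B(p)=0$; your direct extraction---boundedness of $H$ plus continuity of $A$ when $p$ is a limit of points with $B\ne0$, and differentiation of $B\equiv0$ in the remaining sub-case, where both conditions fail outright---covers every such $p$ and makes explicit a degenerate situation the paper passes over.
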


\begin{proof}
The proof is almost parallel
to that of
Proposition 2.4 in \cite{FKKRSUYY2}.
It holds that
\begin{equation}\label{eq:Bxy}
\nabla B
=\op{Hess}(f)\pmt{f_x \\ f_y},
\qquad \op{Hess}(f)
:=\pmt{f_{xx} & f_{xy}\\ f_{xy} & f_{yy}}.
\end{equation}
Now suppose that (2) holds. 
Then $\op{Hess}(f)$ is a regular matrix at $p$.
Since $B(p)=0$, $(f_x,f_y)\ne 0$ at $p$.
Thus, \eqref{eq:Bxy} implies that $\nabla B\ne 0$
at $p$, that is, (1) holds.

We next suppose on the contrary that (2) 
does not hold. By a suitable linear
coordinate change of $(x, y)$, 
we may assume without loss of generality that
$f_{xy}(p) = 0$. 
Then either $f_{xx}(p) = 0$ or $f_{yy}(p) = 0$. 
By \eqref{eq:H2}, and
Theorem \ref{thm:main}, we have
$A(p)=0$.
This with $B(p) = 0$ and $f_{xy}(p) = 0$ 
implies that
$$
f_x(p)^2f_{xx}(p) + f_y(p)^2f_{yy}(p) = 0.
$$
This with $f_{xx}(p)=0$ or $f_{yy}(p)=0$ implies that
$$
\op{Hess}(f)\pmt{f_x \\ f_y}
=\pmt{
f_x(p)f_{xx}(p)\\
f_y(p)f_{yy}(p)
}
=\pmt{0 \\ 0}.
$$
So (1) does not hold.
\end{proof}

A regular curve $\Gamma:(a,b)\to \R^3_1$ 
is called 
{\it null\/} or {\it isotropic\/}
if
$\dot \Gamma(t):=d\Gamma(t)/dt$
is a light-like vector
for each $t\in (a,b)$.

\begin{Def}\label{def:n-deg}
A null curve $\Gamma:(a,b)\to \R^3_1$ 
is called non-degenerate at
$t=c$ if
$\dot \Gamma(c)$ and $\ddot \Gamma(c)$ 
are linearly independent.
If $\Gamma(t)$ is non-degenerate
for all $t\in (a,b)$,
the curve
$\Gamma$
is called a {\it non-degenerate} null curve.
\end{Def}

Let $p\in U$
be a non-degenerate point of type change.
Then, by the implicit function theorem,
there exists a regular curve
$
\gamma:(-\epsilon,\epsilon)\to U
$
such that
$B\circ \gamma(t)=0$
and
$\gamma(0)=p$, where $\epsilon$ is a
positive number.
We call this curve $\gamma$
{\it the characteristic curve}
of type change.
The following assertion
is a generalization of
\cite[Proposition 2.5]{FKKRSUYY2}
for zero-mean curvature surfaces. 

\begin{Prop}\label{prop:1}
Suppose that the graph $t=f(x,y)$ 
over a domain $U$ 
has bounded mean curvature function.
If the graph changes type
along a regular curve $\gamma(t)$
$(|t|<\epsilon)$
such that $f\circ \gamma(t)$
is a non-degenerate null curve in $\R^3_1$,
then $\gamma(t)$
consists of non-degenerate points
of type change.
\end{Prop}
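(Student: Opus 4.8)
The plan is to reduce the statement to a single inequality and then read off everything from the null hypothesis. Since the graph changes type along $\gamma$, we have $B\circ\gamma\equiv 0$, and each point of $\gamma$ lies in $\overline{U_+}\cap\overline{U_-}$; thus a point $\gamma(t)$ is a non-degenerate point of type change in the sense of Definition \ref{def:nondegtc} precisely when $\nabla B\neq 0$ there. By \eqref{eq:Bxy}, $\nabla B=\op{Hess}(f)\pmt{f_x\\ f_y}$, so the whole proof reduces to ruling out $\op{Hess}(f)\nabla f=0$ at every point of $\gamma$, where I identify $\nabla f$ with the column vector $\pmt{f_x\\ f_y}$.

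First I would extract the geometric content of the two standing facts. Writing the graph map as $F(x,y):=(f(x,y),x,y)$ and $\Gamma:=F\circ\gamma$, one has $\dot\Gamma=(\nabla f\cdot\dot\gamma,\dot\gamma)$, so the null condition $\inner{\dot\Gamma}{\dot\Gamma}=0$ reads $(\nabla f\cdot\dot\gamma)^2=|\dot\gamma|^2$ (the central dot denoting the Euclidean product on $\R^2$). Since $B=0$ on $\gamma$ gives $|\nabla f|=1$, this is the equality case of the Cauchy--Schwarz inequality, which forces $\dot\gamma$ to be parallel to $\nabla f$; regularity of $\gamma$ then yields $\dot\gamma=c\,\nabla f$ with a nowhere-vanishing function $c$. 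Next, because the graph is of mixed type with bounded $H$, Corollary \ref{cor:main} applies with $(\alpha,\beta)=(A,B)$ and gives $A\equiv 0$ along $\gamma$. As $A=-\,\nabla f\cdot\op{Hess}(f)\nabla f$ whenever $f_x^2+f_y^2=1$, this says exactly that $\op{Hess}(f)\nabla f$ is orthogonal to $\nabla f$ along $\gamma$.

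Then I would compute the first two derivatives of $\Gamma$. Using $\tfrac{d}{dt}\nabla f=\op{Hess}(f)\dot\gamma=c\,\op{Hess}(f)\nabla f$, differentiation of $\dot\gamma=c\,\nabla f$ gives $\ddot\gamma=\dot c\,\nabla f+c^2\op{Hess}(f)\nabla f$. From $|\nabla f|=1$ one gets $\dot\Gamma=c(1,\nabla f)$, and since $A=0$ the term $\dot\gamma\cdot\op{Hess}(f)\dot\gamma$ drops out of $\tfrac{d^2}{dt^2}f(\gamma)$, so $\ddot\Gamma=(\nabla f\cdot\ddot\gamma,\ddot\gamma)$. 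Comparing the last two entries shows that $\dot\Gamma$ and $\ddot\Gamma$ are linearly dependent if and only if $\ddot\gamma$ is parallel to $\nabla f$. Finally, because $\op{Hess}(f)\nabla f\perp\nabla f$, the component of $\ddot\gamma$ orthogonal to $\nabla f$ is exactly $c^2\op{Hess}(f)\nabla f$, so (as $c\neq0$) one has $\ddot\gamma\parallel\nabla f$ iff $\op{Hess}(f)\nabla f=0$. Hence non-degeneracy of $\Gamma$ in the sense of Definition \ref{def:n-deg} is equivalent to $\op{Hess}(f)\nabla f\neq0$, i.e. to $\nabla B\neq0$, which is the desired conclusion.

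The main obstacle is the bookkeeping in the third step: one must check that the second-fundamental-form term $\dot\gamma\cdot\op{Hess}(f)\dot\gamma$ in $\ddot\Gamma$ genuinely vanishes, and this is precisely where bounded mean curvature enters, through $A\equiv0$. The very same input supplies the orthogonality $\op{Hess}(f)\nabla f\perp\nabla f$ that upgrades ``$\ddot\gamma\parallel\nabla f$'' to ``$\op{Hess}(f)\nabla f=0$''; without it the equivalence between degeneracy of $\Gamma$ and $\nabla B=0$ would break. One should also keep track of $c\neq0$ so that the parallel/non-parallel dichotomy is exhaustive.
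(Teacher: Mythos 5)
Your proof is correct. A direct comparison with ``the paper's proof'' is slightly moot here, because the paper gives no argument at all: it only asserts that the proof is completely parallel to that of \cite[Proposition 2.5]{FKKRSUYY2}, which treats the zero mean curvature case. Your write-up is exactly the self-contained version of that adaptation, and it puts its finger on the right point: in the ZMC case one has $A\equiv 0$ identically, whereas here the only available substitute is that bounded mean curvature forces $A\equiv 0$ \emph{along the type-change curve} (your appeal to Corollary \ref{cor:main}; equivalently, one can argue directly that $A(p)\ne 0$, $B(p)=0$, $p\in\overline{U_+}\cap\overline{U_-}$ would make $H=A/(2|B|^{3/2})$ blow up, so no analyticity is really needed). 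The individual steps all check out: the Cauchy--Schwarz equality argument giving $\dot\gamma=c\,\nabla f$ with $c$ nowhere zero; the identity $A=-\nabla f\cdot\op{Hess}(f)\nabla f$ on $\{f_x^2+f_y^2=1\}$, which turns $A=0$ into the orthogonality $\op{Hess}(f)\nabla f\perp\nabla f$; the computation $\ddot\gamma=\dot c\,\nabla f+c^2\op{Hess}(f)\nabla f$ together with the vanishing of $\dot\gamma\cdot\op{Hess}(f)\dot\gamma$ in $\ddot\Gamma$; and the resulting equivalence between degeneracy of $\Gamma$ and $\op{Hess}(f)\nabla f=0$, i.e. $\nabla B=0$ by \eqref{eq:Bxy} (note \eqref{eq:Bxy} is off by the harmless factor $-2$, which affects nothing). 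Two cosmetic remarks: Corollary \ref{cor:main} is formally stated under the real-analyticity hypotheses of Theorem \ref{thm:main}, while $f$ here need only be $C^\infty$, so it is cleaner to quote the blow-up argument directly as above; and your argument in fact proves the equivalence ``$\Gamma$ non-degenerate $\Leftrightarrow$ $\nabla B\ne 0$'' along a null type-change curve, which is half of the following proposition in the paper as well.
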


\begin{proof}
The proof is completely parallel
to that of \cite[Proposition 2.5]{FKKRSUYY2}.
\end{proof}

The converse assertion is given
as follows,
which is a generalization of
\cite[Proposition 2.6]{FKKRSUYY2}
for zero-mean curvature surfaces. 

\begin{Prop}
Suppose that the graph $t=f(x,y)$ 
over a domain $U$ 
has bounded mean curvature function.
Let $p\in U$ be a
non-degenerate point of type change
and $\gamma(t)$
$(|t|<\epsilon)$
the characteristic curve
of type change such that $\gamma(0)=p$.
Then $f\circ \gamma(t)$
is a non-degenerate null curve.
\end{Prop}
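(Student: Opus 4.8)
The plan is to prove two things along $\gamma$: first that the velocity $\dot\gamma$ points in the null direction of the induced metric, so that $\Gamma:=F\circ\gamma$ is a null curve, and second that $\dot\Gamma,\ddot\Gamma$ are everywhere linearly independent. Here $F(x,y):=(f(x,y),x,y)$ is the graph map (so $F_x=(f_x,1,0)$ and $F_y=(f_y,0,1)$), and $\Gamma$ is the curve written $f\circ\gamma$ in the statement. First I would shrink $\epsilon$ so that $\nabla B\neq0$ holds along all of $\gamma$; then every point of $\gamma$ is a non-degenerate point of type change, hence lies in $\overline{U_+}\cap\overline{U_-}$, and the boundedness of $H$ forces $A=0$ there (Corollary \ref{cor:main}; equivalently, by \eqref{eq:H2}, $H$ cannot stay bounded near a zero of $B$ unless $A$ vanishes). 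Differentiating $B\circ\gamma\equiv0$ gives $\nabla B\cdot\dot\gamma=0$.

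Next I would establish the null property. Set $Q:=f_x^2f_{xx}+2f_xf_yf_{xy}+f_y^2f_{yy}$; a one-line expansion, in which the mixed terms cancel and the remainder factors through $f_x^2+f_y^2-1=-B$, yields the identity
\begin{equation*}
A+Q=-(f_{xx}+f_{yy})B .
\end{equation*}
Since $A=B=0$ on $\gamma$, we obtain $Q=0$ there; and as $Q=(f_x,f_y)\cdot\nabla B$ by \eqref{eq:Bxy}, this means $(f_x,f_y)\cdot\nabla B=0$. Hence $\dot\gamma$ and $(f_x,f_y)$ are both orthogonal to the nonzero vector $\nabla B$ in the plane, so they are parallel; moreover $(f_x,f_y)\neq0$ because $f_x^2+f_y^2=1$ on $\{B=0\}$. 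Therefore $\dot\Gamma=dF(\dot\gamma)$ is proportional to $f_xF_x+f_yF_y=(1,f_x,f_y)$, whose Lorentzian square equals $-1+f_x^2+f_y^2=-B=0$; thus $\Gamma$ is a null curve.

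For non-degeneracy, write $\dot\gamma=\rho\,(f_x,f_y)$ with $\rho$ nowhere zero and put $n:=(1,f_x\circ\gamma,f_y\circ\gamma)$, so that $\dot\Gamma=\rho\,n$ (in particular $\dot\Gamma\neq0$, so $\Gamma$ is regular). Then $\ddot\Gamma=\dot\rho\,n+\rho\,\dot n$, where $\dot n=(0,\,\Hess(f)\dot\gamma)$; since $\dot\gamma$ is a multiple of $(f_x,f_y)$, \eqref{eq:Bxy} gives $\Hess(f)\dot\gamma=\rho\,\nabla B$, so $\dot n=\rho\,(0,\nabla B)$. As $\nabla B\neq0$ along $\gamma$, the vector $(0,\nabla B)$ is nonzero and not proportional to $n$ (their $t$-components are $0$ and $1$), so the part of $\ddot\Gamma$ transverse to $\dot\Gamma$ is the nonzero vector $\rho^2(0,\nabla B)$. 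Hence $\dot\Gamma$ and $\ddot\Gamma$ are linearly independent at every point, and $\Gamma$ is a non-degenerate null curve.

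I expect the main obstacle to be the null part, specifically pinning down that the characteristic direction $\dot\gamma$ is exactly the null direction $(f_x,f_y)$. This is where boundedness of the mean curvature is used essentially: it enters only through $A=0$, which together with $B=0$ and the displayed identity forces $Q=0$, making the two orthogonality relations coincide. Once this is secured, both the null condition and the non-degeneracy reduce to short computations, with $\nabla B(p)\neq0$ furnishing the transverse direction in the last step.
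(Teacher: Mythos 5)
Your proof is correct and takes essentially the same approach as the paper: the paper's own proof merely observes that bounded mean curvature forces $A(\gamma(t))=0$ and then cites the zero-mean-curvature computation of \cite[Proposition 2.6]{FKKRSUYY2}, which is precisely the argument you reconstruct in full (the identity $A+Q=-(f_{xx}+f_{yy})B$ forcing $\dot\gamma$ to be parallel to the null direction $(f_x,f_y)$, and non-degeneracy coming from $\nabla B\neq 0$). One cosmetic remark: the exact identity is $\nabla B=-2\,\Hess(f)\,(f_x,f_y)^T$, so the constants you inherit from \eqref{eq:Bxy} are off by a factor $-2$, but this affects nothing since your argument only uses vanishing and parallelism.
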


\begin{proof}
Using the fact that
$A(\gamma(t))=0$ holds,
the proof of this assertion
is completely parallel to
that of \cite[Proposition 2.6]{FKKRSUYY2}.
\end{proof}

Moreover, the following assertion holds:

\begin{Prop}\label{prop:tc}
Let $t=f(x,y)$ 
be a real analytic
function over the domain $U$ 
which gives a graph with
bounded mean curvature function.
Suppose that the zeros of $B(x,y)$ are all
non-degenerate points of type change.
Then, the mean curvature
vector $H\nu$ 
can be analytically
extended 
to all of 
$U$.
\end{Prop}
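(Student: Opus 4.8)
The plan is to write the mean curvature vector explicitly and thereby reduce the claim to a divisibility statement for the analytically extended function $A$. Combining the formula \eqref{eq:nu} for $\nu$ with \eqref{eq:H2}, on $U_+\cup U_-$ we have
$$
H\nu=\frac{A}{2|B|^{3/2}}\cdot\frac{1}{\sqrt{|B|}}\,(1,f_x,f_y)
=\frac{A}{2B^2}\,(1,f_x,f_y),
$$
where one checks that this single expression is valid in both $U_+$ and $U_-$, since the powers of $|B|$ combine to $|B|^2=B^2$ regardless of the sign of $B$. The vector field $(1,f_x,f_y)$, which equals the unnormalized normal $f_x\times_g f_y$, is real analytic on all of $U$, and $A$ extends real analytically to $U$ by Lemma \ref{lem:A}. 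Hence it suffices to show that the quotient $A/B^2$ extends real analytically across the zero set $\{B=0\}$. As analyticity is local and $A/B^2$ is already analytic off $\{B=0\}$, I only need to work near an arbitrary point $p$ with $B(p)=0$.

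The heart of the matter is to show that $A$ vanishes to order at least two along $\{B=0\}$. From Corollary \ref{cor:main} one gets only first-order vanishing, which cancels a single factor of $B$ but not $B^2$; the upgrade to second order is exactly where the parity estimate in the proof of Theorem \ref{thm:main} is needed, and I expect this to be the main obstacle. Since $\nabla B(p)\ne0$, I would choose real analytic coordinates $(\tilde x,\tilde y)$ near $p$ with $\tilde x=B$, so that $\{B=0\}=\{\tilde x=0\}$. For each fixed $\tilde y$, the curve $s\mapsto(s,\tilde y)$ is real analytic, and along it $\hat\beta(s)=B=s$ changes sign with vanishing order $m=1$. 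Re-running the argument in the proof of Theorem \ref{thm:main} for this curve---boundedness of $H$ forces $\hat\alpha(0)=0$, and then the inequality $2\ell\ge3m$ holds and is strict because $m$ is odd---shows that $\hat\alpha(s):=A(s,\tilde y)$ vanishes to order $\ell\ge2$ at $s=0$.

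Since this holds for every $\tilde y$, both $A(0,\tilde y)=0$ and $\partial_{\tilde x}A(0,\tilde y)=0$ hold identically, so the Taylor expansion of $A$ in $\tilde x$ begins at order two. Thus $A=\tilde x^2\,\tilde A=B^2\,\tilde A$ for some real analytic $\tilde A$ near $p$, whence $A/B^2=\tilde A$ is real analytic there. Patching these local extensions gives a real analytic function on all of $U$ agreeing with $A/B^2$ off $\{B=0\}$, and therefore
$$
H\nu=\frac{1}{2}\left(\frac{A}{B^2}\right)(1,f_x,f_y)
$$
extends real analytically to $U$, as claimed.
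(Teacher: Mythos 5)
Your proof is correct and follows essentially the same route as the paper: both hinge on showing that boundedness of $H$ forces $A$ to vanish to second order along $\{B=0\}$, so that the powers of $|B|$ appearing in $H\nu$ combine to $B^2$ and the quotient $A/B^2$ is real analytic. The only differences are cosmetic --- the paper takes coordinates in which the type-change curve is the $u$-axis and writes $B=vb$, $A=v^2a$ (citing Theorem \ref{thm:main} for the latter factorization), whereas you take $B$ itself as a coordinate and re-run the parity argument from that theorem's proof explicitly, which if anything makes more transparent the step the paper leaves implicit.
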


\begin{proof}
Let $p\in U$ be 
a non-degenerate point of type
change. Then we can take
a real analytic local coordinate system
$(u,v)$ centered at $p$
such that the $u$-axis is the
characteristic curve
of type change.
By the condition $\nabla B(u,0)\ne (0,0)$
(cf. \eqref{eq:NB}),
there exists a real analytic function
$b(u,v)$ defined near the $u$-axis
such that $B(u,v)=v b(u,v)$ and
$b(u,0)\ne  0$.
On the other hand, Theorem \ref{thm:main}
yields that 
there exists a real analytic function
$a(u,v)$ defined near the $u$-axis
such that 
\begin{equation}\label{eq:Aa}
A(u,v)=v^2 a(u,v).
\end{equation}
By \eqref{eq:H2}, we have
$$
H(u,v)=\frac{\sqrt{|v|} a(u,v)}{2 |b(u,v)|^{3/2}}.
$$
By \eqref{eq:nu}, we have that
\begin{equation}\label{eq:Hnu}
H\nu
=
\frac{\sqrt{|v|}a(u,v)}{2 |b(u,v)|^{3/2}}
\frac{1}{\sqrt{|v| |b(u,v)|}}(1,f_x,f_y)
=
\frac{a(u,v)}{2b(u,v)^{2}}
(1,f_x,f_y),
\end{equation}
proving the assertion.
\end{proof}

Finally, we prove
Theorem \ref{thm:existence}
in the introduction:

\begin{proof}[Proof of Theorem \ref{thm:existence}]
Let  $f:\R^2\to \R$  be a real analytic function whose graph gives
a zero-mean curvature surface, with
function $B:=1-f_x^2-f_y^2$
satisfying $\nabla B\ne (0,0)$ if $B=0$. 
Take a real analytic function $\psi:\R\to \R$
such that
\begin{equation}\label{eq:psi}
\psi(0)=\psi'(0)=\psi''(0)=0.
\end{equation}
We then set
\begin{equation*}
 g(x,y):= f(x,y) + \psi (B(x,y)),
\end{equation*}
and
$$
\tilde B:=1-g_x^2-g_y^2.
$$
Since
$$
 g_x = f_x + \psi'(B)B_x, \quad
 g_y = f_y + \psi'(B)B_y,
$$
we have that
\begin{equation}\label{eq:BtB}
\tilde B=B-2\psi'(B)(f_xB_x+f_yB_y)-\psi'(B)^2(B_x^2+B_y^2).
\end{equation}
Here, the relation $C_1\equiv C_2 \mod B$ for
two real analytic functions
$C_i(x,y)$ ($i=1,2$)
means that $(C_1-C_2)/B$ 
is a real analytic function on $\R^2$.
Since $\psi'(B)\equiv 0 \mod B$,
$\tilde B$ can be divided by $B$.
Thus, to show the mean curvature vector field
can be smoothly extended across the set $B=0$, 
it is sufficient to show
that 
$$
\tilde A:=
(g_y^2-1)g_{xx}
-2g_xg_yg_{xy}
+(g_x^2-1)g_{yy}
$$
can be divided by $B^2$.
Since
\begin{align*}
g_{xx}
&= f_{xx} + \psi''(B)B_x^2 + \psi'(B)B_{xx},
\\
g_{xy}&
= f_{xy} + \psi''(B)B_xB_y
+ \psi'(B)B_{xy} , \\
g_{yy} &
= f_{yy} + \psi''(B)B_y^2
+ \psi'(B)B_{yy},
\end{align*}
the fact that $A=0$ yields 
\begin{equation}\label{eq:tA3}
\tilde A\equiv  \psi''(B)\Gamma+\psi'(B)\Delta \mod B^3,
\end{equation}\label{eq:tA}
where
\begin{align*}
\Gamma&:=( f_y^2-1)B_{x}^2
-2f_x f_yB_xB_y
+(f_x^2-1)B_y^2, \\
\Delta&:=
2 (B_x f_x f_{yy}
-B_x f_{xy} f_y-B_y f_x f_{xy}+
B_y f_{xx} f_y) \\
&\phantom{aaaaaaaaaaaaaaaaaa}
+B_{xx} \left(f_y^2-1\right)
-2B_{xy} f_x f_y+
B_{yy} 
\left(f_x^2-1\right).
\end{align*}
Since
\begin{align*}
\Gamma&=
(-B-f_x^2)B_{x}^2
-2f_xf_yB_xB_y
+(-B-f_y^2)B_y^2 \\
&=-B(B_x^2+B_y^2)
 -(f_xB_x+f_yB_y)^2 
\end{align*}
and
\begin{align}
f_xB_x+f_yB_y &=-2 \nonumber
(f_x(f_xf_{xx}+f_yf_{xy})
+f_y(f_xf_{xy}+f_y f_{yy}))\\
&=2A +2B (f_{xx}+f_{yy})
=2B (f_{xx}+f_{yy}) \label{eq:tB},
\end{align}
we have that
\begin{equation}\label{eq:Gamma}
\Gamma\equiv -B(B_x^2+B_y^2) \mod B^2.
\end{equation}
Since 
$$\psi'(B)\equiv 0\mod B^2,\qquad
\psi''(B)\equiv 0\mod B,
$$
\eqref{eq:tA3} and \eqref{eq:Gamma}
yield that
$
\tilde A
$
can be divided by $\tilde B^2$.

To give an explicit example,
we consider the function $f_K(x,y):=x \tanh y$
given in Example \ref{eq:Kob}. 
Then, we have
$$
B(x,y)=(\cosh^2 y - x^2)\text{sech}^4 y
$$
and $x=\pm \cosh y$ 
give the characteristic curves of type change.
We consider the new function
\begin{equation}\label{eq:g0}
g(x,y):=x \tanh y+ c\tanh^3(B(x,y))\qquad (0< c\le 1),
\end{equation}
where $c$ is a constant.
Then the mean curvature vector field
is real analytic along the set
of type change
$\Sigma_f:=\{(\pm \cosh y,y)\,;\, y\in \R\}$.

By \eqref{eq:Gamma},
$$
\Gamma\equiv
-4B \text{sech}^4y 
\mod B^2
$$
holds. By a straightforward calculation, 
$$
\Delta=2(B+1)\op{sech}^4 y
$$
holds. Since $\psi(B)=c \tanh^3(B)$, we have
$$
\psi'(B)\equiv 3cB^2,\quad \psi''(B)\equiv 6cB
\mod B^3.
$$
Thus, \eqref{eq:tA3} yields that
\begin{equation}\label{eq:ratio}
\left.
\frac{\tilde A}{\tilde B^2}
\right|_{(x,y)=(\pm \cosh y,y)}
=
\left.
\frac{\tilde A}{B^2}
\right|_{(x,y)=(\pm \cosh y,y)}
=
\frac{-18c}{\cosh^4 y}
\qquad (y\in \R),
\end{equation}
which never vanishes on the set $\Sigma_f$.

To complete the proof, it is
sufficient to show that $\tilde B/B$
has no zeros if $c$ is sufficiently small. 
We shall now compute $\tilde B/B$
using \eqref{eq:BtB}.
We set
$$
\phi(t):=\frac{\tanh t}t 
$$
which is a real analytic bounded function.
We set
$$
U:=x \,\op{sech}^2y,\quad V:=\op{sech}\, y,\quad
S:=\op{sech}(V^2-U^2).
$$
Here $U$ is unbounded, but $V, S$ are bounded on
$\R^2$. By a straight-forward calculation,
one can get that
$$
\frac{\tilde B}{B}
=1-12 c B \phi(B)^2 S^2 (C_1+C_2),
$$
where
\begin{align*}
C_1&:=2 U(U^2-V^2)\tanh y, \\
C_2&:=3c B^2 \phi(B)^2S^2
\biggl(U^2V^4+(-2U^2+V^2)^2\tanh^2 y\biggr).
\end{align*}
Since 
$$
\frac{\cosh(V^2-U^2)}{\cosh(U^2)}
=\cosh(V^2)-\sinh(V^2)\tanh(U^2),
$$
using the fact that $|V|\le 1$, we have
$$
e^{-1}\le \exp(-V^2)=\cosh(V^2)-\sinh(V^2)<
\frac{\cosh(V^2-U^2)}{\cosh(U^2)}.
$$
In particular
$$
S|U|^m 
=\frac{|U|^m}{\cosh(U^2)}\frac{\cosh(U^2)}{\cosh(V^2-U^2)}
<\frac{e|U|^m}{\cosh(U^2)}
$$
is a bounded function for $m\ge 0$.
Then we can write
$$
\frac{\tilde B}{B}
=1-12 c \phi(B)^2 SB  (SC_1+SC_2).
$$
Since 
$\tanh y$, $\phi(B)$, and $SB=B\op{sech} B$  
are all bounded,
there exists a positive constant $m$ which
does not depend on the choice of $c\in (0,1]$
such that
 $\phi(B)^2SB(SC_1+SC_2)<m$ holds 
for all $(x,y)\in \R^2$, and so
$$
\left|\frac{\tilde B}{B}-1\right|<12 mc.
$$
If $0<c<1/(12 m)$, then the zero set of
$\tilde B$ coincides with that of $B$, proving the 
assertion.
\end{proof}

\begin{ack}
The first, the fourth and the fifth authors 
thank Udo Hertrich-Jeromin and Kosuke Naokawa
for fruitful conversations at TU-Wien.
The authors thank Wayne Rossman for
valuable comments.
\end{ack}


\begin{thebibliography}{20}

\bibitem{FKKRUY}
S.~Fujimori,
Y.~Kawakami,
M.~Kokubu,
W.~Rossman,
M.~Umehara,
and K. Yamada,
{\it Analytic extension of  exceptional constant mean curvature
  one elliptic catenoids in de Sitter 3-space},
preprint.

\bibitem{FKKRSUYY1}
S. Fujimori, Y. W. Kim, S.-E. Koh, W. Rossman, H. Shin,
M. Umehara, K. Yamada and  S.-D. Yang,
Zero mean curvature surfaces in Lorentz-Minkowski $3$-space which 
change type 
across a light-like line, 
Osaka J. Math. {\bf 52} (2015), 285--297.

\bibitem{FKKRSUYY2}
S. Fujimori, Y. W. Kim, S.-E. Koh, W. Rossman, H. Shin, M. Umehara,
K. Yamada and S.-D. Yang,
{\it Zero mean curvature surfaces 
       in Lorentz-Minkowski $3$-space and 2-dimensional
       fluid mechanics}, 
Math. J. Okayama Univ. {\bf 57} (2015), 173--200.

\bibitem{deSitter}
S. Fujimori, W. Rossman, M. Umehara, K. Yamada and
S.-D.Yang,
{\it Spacelike mean curvature one surfaces in
de  Sitter 3-space},
Comm. in Anal. and Geom.
{\bf 17} (2009), 383-427.

\bibitem{FRUYY2}
S. Fujimori, W. Rossman, 
M. Umehara, K. Yamada and  S.-D. Yang,
{\it 
Embedded triply periodic zero mean curvature 
surfaces of mixed type in Lorentz-Minkowski 3-space}, 
Michigan Math. J. {\bf 63} (2014), 189--207. 

\bibitem{FSUY}
  S. Fujimori, K. Saji, M. Umehara and K. Yamada,
  {\itshape Singularities of maximal surfaces},
  Math. Z. {\bf 259} (2008), 827--848.

\bibitem{G}
 C.~Gu, 
 {\itshape 
	The extremal surfaces in the $3$-dimensional
        Minkowski space},
　Acta Math. Sinica {\bf 1}
 (1985), 173--180.

\bibitem{HKS}
A.~Honda,  M. Koiso and  K. Saji,
{\it Fold singularities on spacelike CMC surfaces in
Lorentz-Minkowski space}, preprint.

\bibitem{KKSY}
 Y. W. Kim, S.-E Koh, H. Shin and S.-D. Yang,
  {\itshape Spacelike maximal surfaces, 
timelike minimal surfaces, and Bj\"{o}rling 
  representation formulae}, 
  J. Korean Math. Soc. {\bf 48} (2011), 1083--1100. 
\bibitem{Kl}
  V. A. Klyachin, 
  {\itshape Zero mean curvature surfaces of mixed type in Minkowski space}, 
  Izvestiya Math. {\bf 67} (2003), 209--224.

\bibitem{K1}
 O. Kobayashi,
    {\itshape Maximal surfaces in the $3$-dimensional
        Minkowski space $\mathbb{L}^3$},
    Tokyo J. Math., {\bfseries 6} (1983), 297--309.
\bibitem{ST}
 V. Sergienko and V.G. Tkachev, 
{\it Doubly periodic maximal surfaces with singularities}, 
Proceedings on Analysis and Geometry (Russian) (Novosibirsk Akademgorodok,
1999), 571--584, Izdat. Ross. Akad. Nauk Sib. Otd. Inst. Mat., 
Novosibirsk, 2000.
%
\bibitem{UY}
 M. Umehara and K. Yamada,
 {\itshape
     Maximal surfaces with singularities in
     Minkowski space},
      Hokkaido Math. J.  {\bfseries 35} (2006), 13--40.
\end{thebibliography}
\end{document}